\declaretheoremstyle[
bodyfont=\normalfont,
]{remstyle}
\newtheorem{defi}{\bf Definition}
\newtheorem{lemma}{\bf Lemma}
\newtheorem{theorem}{\bf Theorem}
\newtheorem{remark}{\bf Remark}
\newcommand\be{\begin{eqnarray*}}
\newcommand\ee{\end{eqnarray*}}
\newcommand\beq{\begin{equation}}
\newcommand\eeq{\end{equation}}
\newcommand\eps{\epsilon}
\newcommand{\Cal}[1]{\mathcal{#1}}
\newcommand\ben{\begin{eqnarray}}
\newcommand\een{\end{eqnarray}}
\begin{document}

\subjclass[2000]{11B25 (primary).} \keywords{product sets}

\date{\today}

\title[Discrete spheres and arithmetic progressions in product sets] 
{Discrete spheres and arithmetic progressions in product sets}

\author{Dmitrii Zhelezov}
\thanks{Department of Mathematical Sciences, 
Chalmers University Of Technology and University of Gothenburg} 
\address{Department of Mathematical Sciences, 
Chalmers University Of Technology and University of Gothenburg,
41296 Gothenburg, Sweden} \email{zhelezov@chalmers.se}

\subjclass[2000]{11B25 (primary).} \keywords{product sets, arithmetic progressions}

\date{\today}

\begin{abstract}
	We prove that if $B$ is a set of $N$ positive integers such that $B\cdot B$ contains an arithmetic progression of length $M$, then for some absolute $C > 0$,
	$$
	 \pi(M) + C \frac {M^{2/3}}{\log^2 M} \leq N,
	$$  
	where $\pi$ is the prime counting function.
	This improves on previously known bounds of the form $N = \Omega(\pi(M))$ and gives a bound which is sharp up to the second order term, as P\'ach and S\'andor gave an example for which 
	$$
	N < \pi(M)+ O\left(\frac {M^{2/3}}{\log^2 M} \right).
	$$ 
	
   The main new tool is a reduction of the original problem to the question of approximate additive decomposition of the $3$-sphere in $\mathbb{F}_3^n$ which is the set of 0-1 vectors with exactly three non-zero coordinates. Namely, we prove that such a set cannot have an additive basis of order two of size less than $c n^2$ with absolute constant $c > 0$. 
\end{abstract}

\maketitle

\section{Introduction}
	One of the main objectives of additive combinatorics is to show that a set cannot be simultaneously additively and multiplicatively structured. In particular, if one defines the sumset 
	$$
	A+A = \{a + a' : a,a' \in A \}
	$$   
	and the product set
	$$
	A \cdot A = \{aa' : a,a' \in A \}	
	$$
	of a set $A$, the sum-product conjecture of Erd\H{o}s and Szemer\'edi claims that the size of at least one of these sets is substantially larger than that of the original set. Originally, the conjecture was formulated only for integer sets, but later it became clear that conceivably it should hold in a much more general setting, so now it is usually stated for $A \subset \mathbb{C}$. More precisely,
\beq \label{eq:ErdosSz}
		\max \{|A \cdot A|, |A+A| \} \geq C_\eps |A|^{2 - \eps}, 	
	\eeq
	where $\eps > 0$ can be taken arbitrarily small and $C_\eps$ is some constant which depends only on $\eps$. There are numerous results towards this conjecture in different flavours but it is still far from resolution. We refer the interested reader to \cite{TaoSurvey} for an overview of this topic\footnote{Further progress has been made since the paper \cite{TaoSurvey} was published, but it still gives a great overview of the sum-product phenomenon. See also the standard reference \cite{TaoVu}.}.
	
	On the other hand, very little is known about sets which are themselves of the form $B \cdot B$ or $B+B$, or subsets thereof. Clearly, such sets must be as well multiplicatively resp. additively structured in a certain sense, but perhaps in a more subtle way. For example, it is an open problem to algorithmically decide whether a given set $A$ is of the form $B+B$, see Problem 4.11 in \cite{CrootProblems}. 
	
	By analogy to the classical sum-product phenomenon, one might expect that $B \cdot B$ (resp. $B+B$) cannot contain large additively (resp. multiplicatively) structured pieces, with some freedom left in defining what the actual structure is. Some results in this direction have been obtained by Roche-Newton and the author \cite{RNZ}, \cite{Z1}, where the structure is defined in terms of the additive (resp. multiplicative) energy.  In the extreme case of a product set $B \cdot B$ containing an arithmetic progression (AP), the author showed in \cite{Z2} that the maximal length of the arithmetic progression must be  $O(|B|\log |B|)$, at least in the integer case, together with an example exhibiting a progression of length $\Omega(|B|\log |B|)$.

	The goal of the present note is to further refine these bounds in the integer case by recasting the original problem to $\mathbb{F}_3^n$. The linear space structure of $\mathbb{F}_3^n$ allows one to obtain more control over the structure of the sets in question and eventually significantly refine the previous bound, which was based mainly on number theoretic arguments. The heart of the proof is Lemma 4 below about minimal additive decomposition of discrete spheres in $\mathbb{F}_3^n$ which might be of independent interest. 
	
	One can think of the problem considered in the present paper as the problem of finding a uniform bound for the minimal size of a mutiplicative basis	\footnote{The reader might be familiar with \emph{additive bases} which are defined analogously but with multiplication replaced with addition. }  	(of order two)
 for an integer arithmetic progression of size $N$. 
 
 Indeed, recall that a set $B$ is a \emph{multiplicative basis} for a set of numbers $A$ if $A \subset B \cdot B$. For a given set $A$, define 
 $$
 MB_\times(A) := \min \{|B| : A \subset B \cdot B \}
 $$ and 
$$
	MBP_\times (M) := \min \{MB_\times(P) : P \text{ is an AP of length } M \}.
$$ 
The main result of the present paper can be reformulated as follows. 
\begin{theorem} \label{thm:main} 
There is an absolute $C > 0$ such that 
$$
	 \pi(M) + C \frac {M^{2/3}}{\log^2 M} \leq MBP_\times (M),
$$
where $\pi$ is the prime counting function. 
\end{theorem}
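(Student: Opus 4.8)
\emph{Proposed proof.} The plan is to bound $N=|B|$ by adding two disjoint contributions: a ``prime part'', carrying the main term $\pi(M)$, and a ``composite part'' coming from a single short dyadic window, carrying the refinement. Fix $I:=\bigl(M^{1/3},2M^{1/3}\bigr]$ and set $n:=\#\{p\in I:p\ \text{prime}\}$, so that $n\sim 3M^{1/3}/\log M$ and hence $n^{2}\asymp M^{2/3}/\log^{2}M$, which is exactly the size of the correction we must produce. Writing $P=\{a+jd:0\le j<M\}$, I would first normalise the progression — dividing out $\gcd(a,d)$ and applying the elementary reductions of \cite{Z2} — so that the condition $P\subseteq B\cdot B$ becomes amenable to a prime-by-prime analysis. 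In this normal form, a refinement of the counting argument of \cite{Z2} (keeping track of the lower-order terms) should yield a subset $B_{1}\subseteq B$ consisting of primes with $|B_{1}|\ge\pi(M)-o\bigl(M^{2/3}/\log^{2}M\bigr)$; morally, all but a negligible set of primes $p\le M$ must divide an element of $P$ of the form $pq$ with $q$ prime, and such an element can only be written inside $B\cdot B$ as $1\cdot pq$ or $p\cdot q$.

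For the refinement I would exploit products of three distinct primes from $I$. Each such product $p_{1}p_{2}p_{3}$ is a squarefree integer of size $\asymp M$, and by a sieve estimate (applied to the normalised progression) a positive proportion $\delta>0$ of the $\binom{n}{3}$ such products lie in $P$. Now if $p_{1}p_{2}p_{3}=bb'$ with $b,b'\in B$, then $b$ and $b'$, being divisors of a squarefree integer all of whose prime factors lie in $I$, are themselves squarefree with all prime factors in $I$ and with at most three prime factors each. Encoding any such integer by the indicator vector of its set of prime divisors (a $\{0,1\}$-vector of length $n$ with at most three ones), the identity $bb'=p_{1}p_{2}p_{3}$ turns into an additive decomposition, over $\mathbb{Z}$, of the $3$-sphere vector $s=e_{p_{1}}+e_{p_{2}}+e_{p_{3}}$ into two such vectors. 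Letting $T\subseteq\{0,1\}^{n}$ collect the vectors obtained from $B$ in this way, we conclude that $T+T$ contains at least $\delta\binom{n}{3}$ elements of the $3$-sphere $S_{3}$.

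The combinatorial core is then Lemma 4; what we use is that any $T\subseteq\{0,1\}^{n}$ with $T+T$ covering a positive proportion of $S_{3}$ has $|T|\ge c\,n^{2}$. The proof is a short double count: if $s\in S_{3}$ and $s=t+t'$ with $t,t'\in T\subseteq\{0,1\}^{n}$, then the supports of $t$ and $t'$ partition $\operatorname{supp}(s)$, so $\{t,t'\}$ is either $\{0,s\}$ or a pair consisting of a vector with two ones and a vector with one one. Writing $T_{k}:=T\cap S_{k}$ (with $S_{0}=\{0\}$, $S_{1}$ the unit vectors, $S_{2}$ the vectors with two ones), the first alternative covers at most $|T_{3}|\le|T|$ elements of $S_{3}$, and the second at most $|T_{2}|\cdot|T_{1}|\le n|T|$, so $\delta\binom{n}{3}\le(n+1)|T|$ and $|T|\ge c_{\delta}n^{2}$. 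This is stronger than the trivial bound $|T|\ge\sqrt{|S_{3}|}\asymp n^{3/2}$, and it is precisely the quadratic dependence here that produces the exponent $2/3$ (rather than $1/2$) of $M$ in the theorem.

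Finally I would assemble the pieces. At most $n$ of the vectors in $T$ have exactly one $1$ and at most one is $0$, so $B$ contains at least $|T|-n-1\ge(c_{\delta}-o(1))n^{2}$ integers that are products of two or three distinct primes from $I$; these are composite and hence disjoint from the primes in $B_{1}$. Therefore
$$
N\ \ge\ |B_{1}|+\bigl(|T|-n-1\bigr)\ \ge\ \pi(M)-o\!\left(\frac{M^{2/3}}{\log^{2}M}\right)+(c_{\delta}-o(1))\,n^{2}\ \ge\ \pi(M)+C\,\frac{M^{2/3}}{\log^{2}M}
$$
for a suitable absolute $C>0$, since $n^{2}\asymp M^{2/3}/\log^{2}M$ dominates the error. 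The conceptual heart is Lemma 4; the part I expect to be genuinely delicate is the normalisation of the progression together with pinning down the main term $\pi(M)$ with an error of only $o(M^{2/3}/\log^{2}M)$, uniformly in $a$ and $d$ (in particular, cleanly disposing of progressions with very large common difference), and, alongside it, the sieve input guaranteeing that a positive proportion of the three-prime products from $I$ really do occur in the progression.
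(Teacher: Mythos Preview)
Your elementary version of Lemma~4 --- working with $\{0,1\}$-vectors over $\mathbb{Z}$ rather than over $\mathbb{F}_3^n$, and using that the divisors of a squarefree integer partition its support --- is a genuine simplification. The paper states Lemma~4 in $\mathbb{F}_3^n$, where a point of $S_3$ can be written as $b+b'$ in many non-rigid ways, and its proof occupies several pages (a Case~1/2/3 analysis of differences, an auxiliary lemma bounding $|(X+Y)\cap S_2|$, a pruning step, and Cauchy--Schwarz). Your two-line double count avoids all of that, and is correct.

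However, two concrete issues keep the argument from closing.

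\emph{The window $I$.} With $I=(M^{1/3},2M^{1/3}]$ every triple product $p_1p_2p_3$ exceeds $M$; after the normalisation $P=[u+1,u+M]$ with $u\ll M^{2/3}\log M$, essentially none of these products lie in $P$, so your sieve claim fails as stated. The paper instead takes $\Cal{P}_2\subseteq[3,M^{1/3}]$ and uses the ``powers of $2$'' device (Lemma~\ref{lm:power_primes}) to place $2^{k}p_1p_2p_3$ inside $P$; you need the same trick (or a window with products below $M$).

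\emph{The main term.} The serious gap is the one you flag yourself: that $B$ contains $\pi(M)-o(M^{2/3}/\log^{2}M)$ \emph{primes}. The only primes forced into $B$ are those lying in $P=[u+1,u+M]$, and the reduction only gives $u\ll M^{2/3}\log M$; hence $|B_1|\geq\pi(M)-\pi(u)=\pi(M)-O(M^{2/3})$, and this deficit $O(M^{2/3})$ swamps your gain $c\,n^{2}\asymp M^{2/3}/\log^{2}M$. Your ``moral'' reason (that $pq\in P$ forces $p\in B$) fails because $pq=1\cdot pq$ is equally allowed. So as written the two pieces do not add up to more than $\pi(M)$.

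The paper handles this combining step very differently. It maps all of $B$ into $\mathbb{F}_3^{|\Cal{P}_1|+|\Cal{P}_2|}$ via $b\mapsto(v_p(b)\bmod 3)_p$ (which annihilates the stray powers of~$2$), so that the $1$-sphere coming from $\Cal{P}_1$ and the $3$-sphere coming from $\Cal{P}_2$ live in one ambient space; it then runs a graph argument on the $1$-sphere edges (counting trees among the connected components) to show that the two contributions genuinely add. This is precisely why the paper needs the harder $\mathbb{F}_3^n$ version of Lemma~4: $\rho(B)$ is not confined to $\{0,1\}$-vectors.

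There is a fix entirely in your spirit, though. Use Lemma~\ref{lm:power_primes} for the prime part as well: for every odd prime $p\leq M$ there is $k\geq 0$ with $2^{k}p\in P$, and any factorisation $2^{k}p=bb'$ forces some $\beta_p:=2^{j}p$ to lie in $B$. The $\beta_p$ are pairwise distinct (distinct odd parts), giving $\pi(M)-1$ elements of $B$, each with exactly one odd prime factor. Your $T$-argument then supplies $\gg n^{2}$ further elements of $B$ with at least two odd prime factors from $I$, and the two families are disjoint by the number of odd prime divisors. This yields $|B|\geq\pi(M)+\Omega(M^{2/3}/\log^{2}M)$ with no loss whatsoever on the main term, and keeps your elementary Lemma~4 intact.
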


Clearly,  in order to bound $MBP_\times (M)$ from above it suffices to construct a thin multiplicative basis for the integer interval $[M]$. This problem was recently considered by Pach and S\'andor \cite{PachSandor} (see also references therein), who constructed an essentially best possible multiplicative basis. In particular, they proved the following.

\begin{theorem}[Pach and S\'andor, \cite{PachSandor}] \label{thm:PachSandor}
$$
MB_\times([M]) \leq \pi(M) + 301 \frac {M^{2/3}}{\log^2 M}.
$$
\end{theorem}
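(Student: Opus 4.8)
\emph{Proof plan.} The statement is constructive, so the plan is to exhibit one multiplicative basis $B\subseteq[M]$ with $|B|\le\pi(M)+301\,M^{2/3}/\log^2 M$. Here we never have to compare with a competing basis: producing a single sufficiently thin one is enough. Two members are in fact \emph{forced} in every multiplicative basis of $[M]$: since each prime $p\le M$ must lie in $B\cdot B$ and can only be written $1\cdot p$, one has $\{1\}\cup\mathcal P\subseteq B$, where $\mathcal P$ is the set of primes $\le M$; this is $\pi(M)+1$ elements and costs us nothing. It therefore suffices to adjoin a set $S$ of composite integers with $|S|=O(M^{2/3}/\log^2 M)$ such that every $n\le M$ with $\Omega(n)\ge 3$ factors as $n=de$ with $d,e\in\{1\}\cup\mathcal P\cup S$; the cases $\Omega(n)\le 2$ are automatic, as $n$ is then $1$, a prime $1\cdot n$, or a product $p\cdot q$ with $p,q\in\mathcal P$. (Throughout, $\Omega$ counts prime factors with multiplicity and $P^+(n)$ denotes the largest prime factor of $n$.)

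The dominant part of $S$ is the family of \emph{balanced-or-smaller semiprimes}
$$
S_0:=\{\,qq':q\le q'\text{ primes},\ qq'^2\le M\,\}.
$$
The first step is the estimate $|S_0|=O(M^{2/3}/\log^2 M)$. Writing a member as $qq'$ with $q\le q'$ forces $q\le M^{1/3}$, and then $q'$ ranges over primes up to $\sqrt{M/q}$, so $|S_0|\le\sum_{q\le M^{1/3}}\pi(\sqrt{M/q})$. For $q\le M^{1/3}$ one has $\log\sqrt{M/q}\gg\log M$, hence $\pi(\sqrt{M/q})\ll\sqrt{M/q}/\log M$; combining this with $\sum_{q\le Y,\,q\text{ prime}}q^{-1/2}\ll\sqrt Y/\log Y$ (partial summation from $\pi(t)\ll t/\log t$) gives $|S_0|\ll\frac{\sqrt M}{\log M}\cdot\frac{M^{1/6}}{\log M}=\frac{M^{2/3}}{\log^2 M}$. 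This computation is exactly where the exponent $2/3$ and the square on the logarithm come from. We then take $S$ to be $S_0$ together with several much sparser families: all integers up to $M^{3/5}$ (that is $M^{3/5}=o(M^{2/3}/\log^2 M)$ elements), all prime powers $p^a\le M$ with $a\ge2$ ($O(\sqrt M)$ elements), and a bounded number of further stratified families of $k$-almost-primes, each confined to an initial segment $[1,M^{3/5+o(1)}]$ and hence of size $o(M^{2/3}/\log^2 M)$; the total is still $O(M^{2/3}/\log^2 M)$.

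The verification $[M]\subseteq B\cdot B$ is a case analysis on the prime factorization $p_1\le\cdots\le p_k$ of $n$, with $k=\Omega(n)\ge3$. If $n\le M^{3/5}$ then $n\in S$. Otherwise set $p=P^+(n)$. If $p>M^{2/5}$ then $n/p<M^{3/5}$ lies in $S$ and $n=(n/p)\cdot p$ works. If $p\le M^{1/5}$ then $n$ is $M^{1/5}$-smooth and, being $>M^{3/5}$, admits a divisor $d$ obtained by greedily multiplying its prime factors with $M^{2/5}\le d<M^{3/5}$, whence $n/d\le M^{3/5}$ and both factors lie in the initial segment. The remaining range $M^{1/5}<p\le M^{2/5}$ is where the semiprime structure is used: $p_1p_2\in S_0$ always (as $p_1p_2^2\le p_1p_2p_3\le n\le M$), so $n=(p_1p_2)\cdot p_3$ settles $k=3$; for $k=4$ one picks among $(p_1p_2)(p_3p_4)$, $(p_1p_4)(p_2p_3)$ and $(p_1p_2p_3)(p_4)$ according as $p_4\le p_1p_2$, $p_1p_2<p_4\le p_2p_3$, or $p_4>p_2p_3$, checking that each factor lies in $S_0$, in $\mathcal P$, or (for $p_1p_2p_3$, which is $<M^{3/5}$ in the last case) in the initial segment; larger $k$ is reduced to these by peeling off the two smallest primes while tracking sizes.

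The main obstacle — and the place where an explicit constant such as $301$, rather than a bare $O(1)$, gets pinned down — is the intermediate regime of the last step: handling $n$ with many prime factors and a \emph{medium-large} dominant prime, one neither big enough to peel off cleanly nor small enough for $n$ to be smooth, while arranging that every auxiliary almost-prime invoked stays inside an initial segment short enough to cost only $o(M^{2/3}/\log^2 M)$. Getting this combinatorial bookkeeping exactly right is the delicate core of the construction; the auxiliary counting (for $S_0$ and for each extra family) must also be done without losing stray factors of $\log\log M$, but that is routine prime counting once the families have been chosen narrowly enough.
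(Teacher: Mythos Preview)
The paper does not prove this statement. Theorem~\ref{thm:PachSandor} is quoted from the external reference of Pach and S\'andor and is used only as a benchmark showing that the lower bound of Theorem~\ref{thm:main} is sharp up to the constant in the second-order term. There is therefore no in-paper proof to compare your proposal against.

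On its own merits, your sketch is a reasonable outline of the Pach--S\'andor construction: the forced core $\{1\}\cup\mathcal P$, the semiprime family $S_0=\{qq':q\le q',\,qq'^2\le M\}$ of size $O(M^{2/3}/\log^2 M)$, and a case split on $P^+(n)$ are indeed the main ingredients. The count $|S_0|\ll M^{2/3}/\log^2 M$ and the handling of $\Omega(n)\le 4$ are essentially correct. Two points keep this from being a proof rather than a plan. First, the reduction ``larger $k$ is reduced to these by peeling off the two smallest primes'' does not work as stated: for $k\ge 5$ with $M^{1/5}<p_k\le M^{2/5}$, peeling off $p_1p_2$ leaves $p_3\cdots p_k$, which need not lie below $M^{3/5}$ nor in $S_0$, so a more careful recursive splitting (or additional auxiliary families) is required. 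Second, you never actually pin down the constant: you promise ``a bounded number of further stratified families'' and say the explicit $301$ arises from ``combinatorial bookkeeping,'' but none of that bookkeeping is carried out. As written this establishes $MB_\times([M])\le\pi(M)+O(M^{2/3}/\log^2 M)$ modulo the $k\ge 5$ gap, not the stated inequality with constant $301$.
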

Theorem \ref{thm:PachSandor} shows that the bound of Theorem \ref{thm:main} is essentially best possible up to the constant in the second order term. It is conceivable that in fact 
$MBP_\times (M) = MB_\times([M])$ (and  the proof of Theorem \ref{thm:main} gives a rather strong support for this claim), but we do not address this issue in the present paper. It is also an open question to establish the best possible constant in the second order term of $MB_\times([M])$ and $MBP_\times (M)$. 
 	
\section{Notation}	
	Throughout the paper we use the standard notation $X \ll Y$, $Y \gg X$, $X = O(Y)$, $Y = \Omega(X)$. All of these expressions mean that there is a constant $C$, independent of $X$ and $Y$, such that $|X| \leq CY$. 
		
	We write $(a, b)$ for the greatest common divisors of two integers $a$ and $b$. For a prime $p$, we write $v_p(x)$ for the maximal power $f$ such that $p^f | x$. We also write 
	$p^f\| x$ if $f = v_p(x)$, that is $p^f | x$ but $p^{f+1} \nmid x$.
	
	For a subset $X$ of a linear space $V$ we write $\mathrm{Span}(X)$ for the linear span of the elements of $X$.

\section{Main result}	
		
Throughout the paper we will assume that an arithmetic progression $A$ of size $M$ is contained in the product set $B \cdot B$ of an integer set $B$ of size $N$. Our goal is to bound 
$MBP_\times (M)$, that is, to give a uniform lower bound on $N$ in terms of $M$.

Obviously, a lower (upper) bound on $N$ in terms of $M$ is equivalent to an upper (lower) bound on $M$ in terms of $N$. We will prove Theorem \ref{thm:main} by bounding $M$ in terms of $N$ rather then what is stated, as it is slightly more convenient for calculations.

Both Lemma \ref{lm:reduction} and \ref{lm:small_ap} below are essentially borrowed from \cite{Z2} and presented here in order to make the exposition self-contained.

\begin{lemma}[Reduction of sets] \label{lm:reduction} Suppose that there exist a set $B$ of $N$ positive integers, such that there is an $M$-term arithmetic progression $A$ such that $A \subset B \cdot B$. Then there exist sets $A', B'$ such that 
\begin{enumerate}
	\item $|A'| = |A| = M$,
	\item $|B'| = |B| = N$,
	\item $A' =\{ a+md:\ 1\leq m\leq M\} \subset B'\cdot B'$,
	\item if $g=(a,d)$ with $a=gu$ and $d=gv$ then $(v,g)=1$
\end{enumerate}

 For these sets we have the following property:

 If $\Cal{M}\subset \{ m:\  1\leq m\leq M\}$ is such that for each $m\in \Cal{M}$ there exists a prime $p_m$ for which $p_m|u+vm$ and $p_m\nmid u+vm'$ for all $m'\in \Cal{M}\setminus \{m\}$, then 
 $|B'| \geq |\Cal{M}|$.

\end{lemma}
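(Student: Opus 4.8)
The plan is to argue directly from the product structure. Since $A' = \{a + md : 1 \le m \le M\} \subset B' \cdot B'$, for every $m$ fix a factorization $a + md = \beta_m \beta'_m$ with $\beta_m, \beta'_m \in B'$, and write $a + md = g(u+vm)$ using $a = gu$, $d = gv$. I would then build an injection $\phi \colon \Cal{M} \to B'$, from which $|B'| \ge |\phi(\Cal{M})| = |\Cal{M}|$ follows immediately.

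To set up $\phi$, first record two elementary facts. (i) The primes $p_m$, $m\in\Cal{M}$, are pairwise distinct: if $p_m = p_{m'}$ for $m\neq m'$ in $\Cal{M}$, then $p_m \mid u+vm$ by the choice of $p_m$ while $p_{m'} = p_m \nmid u+vm$ by the choice of $p_{m'}$ (since $m \in \Cal{M}\setminus\{m'\}$), a contradiction. (ii) Since $p_m \mid u+vm$, we get $p_m \mid a+md = \beta_m\beta'_m$, so $p_m$ divides at least one factor; relabelling, assume $p_m \mid \beta_m$ and set $\phi(m) := \beta_m$. For injectivity, suppose $\phi(m) = \phi(m') =: \beta$ with $m\neq m'$ in $\Cal{M}$. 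Then $\beta$ divides both $g(u+vm)$ and $g(u+vm')$; in particular $p_m \mid \beta \mid g(u+vm')$, so as $p_m$ is prime, either $p_m \mid g$ or $p_m \mid u+vm'$, and the second is ruled out because $m'\in\Cal{M}\setminus\{m\}$. Hence $p_m \mid g$, and symmetrically $p_{m'}\mid g$. When $p_m \nmid g$ this is already a contradiction, so in the ``generic'' case $\phi$ is injective outright.

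The case $p_m \mid g$ is where property (4) must be used, and I expect it to be the main obstacle. Here $(v,g)=1$ forces $p_m \nmid v$ and $p_{m'}\nmid v$; combined with $(u,v)=1$ and $p_m\mid u+vm$ this pins down the $p_m$-adic valuations of the two terms: $v_{p_m}(a+md) = v_{p_m}(g) + v_{p_m}(u+vm) > v_{p_m}(g) = v_{p_m}(a+m'd)$, the last equality because $p_m\nmid u+vm'$. Feeding in $\beta \mid a+m'd$ gives $v_{p_m}(\beta) \le v_{p_m}(g)$, hence $v_{p_m}(\beta'_m) \ge v_{p_m}(u+vm) \ge 1$; tracking how the common value $\beta$ must lie inside the $g$-part versus the prime-isolated part $u+vm$ of each term, and using that all the primes $p_m$ with $p_m\mid g$ are distinct, produces the contradiction. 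Once injectivity of $\phi$ is established, $|\Cal{M}| = |\phi(\Cal{M})| \le |B'|$, which is exactly the claimed inequality; the only delicate point in the whole argument is disposing of a private prime that divides the reduced gcd $g$, everything else being a direct divisibility computation off the factorizations $a+md = \beta_m\beta'_m$.
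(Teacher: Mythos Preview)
Your proposal only addresses the second part of the lemma (the bound $|B'|\ge|\Cal M|$), and for that part the argument has a genuine gap in exactly the case you flag as ``the main obstacle''. You correctly derive that if $\phi(m)=\phi(m')=\beta$ then $p_m\mid g$, $v_{p_m}(\beta)\le v_{p_m}(g)$, and hence $v_{p_m}(\beta'_m)\ge 1$. But the sentence ``tracking how the common value $\beta$ must lie inside the $g$-part\dots\ produces the contradiction'' is not a proof: no contradiction actually follows from the inequalities you have. Concretely, take $g=p_1p_2$, $v=1$, and suppose the fixed factorisations are $g(u+m_1)=\beta\cdot\gamma_1$ and $g(u+m_2)=\beta\cdot\gamma_2$ with $\beta=p_1p_2$. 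Then $p_1\mid\beta$ and $p_2\mid\beta$, so your rule allows $\phi(m_1)=\phi(m_2)=\beta$; every inequality you wrote down is satisfied, and nothing you have recorded is violated. The point is that when several of the distinguished primes divide $g$, a single element of $B'$ can absorb all of them simultaneously, so the constraint ``$p_m\mid\phi(m)$'' does not separate the images. (Property~(4) does not help here; it only says $p_m\nmid v$, which you never use, and in fact the paper's proof of this part does not invoke~(4) at all.)

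The paper avoids this difficulty by a linear-algebraic argument rather than an injection. One maps $B'$ into $(\mathbb F_q)^{|\Cal M|}$ via $\rho(x)_m=v_{p_m}(x)\bmod q$, and then passes to $\check B=\rho(B')-\rho(g)/2$; this shift kills the $g$-contribution uniformly, so that $\rho(u+vm)\in\check B+\check B$ for each $m\in\Cal M$. Since $\rho(u+vm)$ has a single nonzero coordinate (namely the $m$-th), these vectors span the whole space, forcing $|\check B|\ge|\Cal M|$ and hence $|B'|\ge|\Cal M|$. The subtraction of $\rho(g)/2$ is precisely what handles the $p_m\mid g$ case that breaks your injection; if you want to rescue the combinatorial approach you would need something like a Hall-type matching argument, and verifying Hall's condition here seems to require essentially the same spanning argument the paper uses.
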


\begin{proof}
Let $(A,B)$ be a pair of sets with the product of the elements of $B$ minimal among all pairs of sets satisfying (1)--(3) (note that there could be many such pairs and there is at least one such a pair). If $(v,g)\ne 1$ then there exists a prime $p$ and exponents $e>f\geq 1$ with $p^e\| d$ and $p^f\| a$. Therefore $p^f\| x$ for every $x \in A$, which means that if  $x = b_1b_2$ then 
$$
\text{either} \ 0<v_p(b_1), v_p(b_2)<f; \qquad \text{or} \ v_p(b_1)=0 \text{ and } v_p(b_2)=f,
$$
(where $v_p(b)$ is the exact power of $p$ dividing $b$). 

If $f = 1$ then let $A' = \{ x/p:\ x\in A\}$ and 
$$
B' := \{ b\in B:\ v_p(b)=0\}\  \cup\  \{ b/p: \ b\in B \ \& \ 0<v_p(b) = f\}.
$$
Otherwise, let $A'=\{ x/p^2:\ x\in A\}$ and 
$$
B':= \{ b\in B:\ v_p(b)=0\}\  \cup\  \{ b/p: \ b\in B \ \& \ 0<v_p(b)<f\} \  \cup \  \{ b/p^2: \ b\in B \ \& \  v_p(b)\geq f\}.
$$
One easily verifies that $A'\subset B'\cdot B'$ with $|A'|=M, \ |B'|=N$ and the product of the elements of $B'$ is much smaller than the product of the elements of $B$. This  contradicts our assumption of minimality.

Now we prove the second part of the lemma. Let $\mathcal{P}$ be the set of primes $p_m$ with $m \in \Cal{M}$, so $|\Cal{P}| = |\Cal{M}| := M_p$. Define a map $\rho: \mathbb{Z} \to (\mathbb{F}_q)^{M_p}$, where $q$ is some prime number defined in due course, as follows. If $x = D\prod_{m \in \Cal{M}}p^{e_m}_m$, then $\rho(x)_i = e_i \mod q$. In words, the $i$th coordinate of $\rho(x)$ is $v_{p_i}(x)$ modulo $q$. Clearly, $\rho(A') \subset \rho(B') + \rho(B')$ and $|B'| \geq |\rho(B')|$ . 

Now let $\rho_g = \rho(g)$ and define $\check{B} = \rho(B') - \rho_g / 2$, where the division is coordinate-wise. It is easy to see that $\rho(u + vm) \in \check{B} + \check{B}$ for any $m \in \Cal{M}$. Moreover, by the hypothesis of the lemma we can choose $q$ such that $\rho(u + vm)$ has only one non-zero coordinate, namely $m$. But then obviously the linear span $\mathrm{Span}(\check{B})$ in $(\mathbb{F}_q)^{M_p}$ must be the whole space, so $|\check{B}| \geq M_p = |\Cal{M}|$. The desired property now follows.

\end{proof}

From now on we will assume that the pair of sets $(A, B)$ in question satisfies the properties (1)--(4) of Lemma \ref{lm:reduction}. 

\begin{lemma}[Further reduction to $v=1$] \label{lm:small_ap}
	Using the notation of Lemma \ref{lm:reduction}, if $|B| \leq \pi(M) + M^{2/3}$, one can assume that $v = 1$, $u \ll M^{2/3}\log M$.
\end{lemma}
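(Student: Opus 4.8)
The plan is to route everything through the second part of Lemma~\ref{lm:reduction}: an index $m$ possessing a \emph{private prime} --- a prime dividing $u+vm$ but none of the other $u+vm'$ --- forces a distinct element of $B$, so whenever too many indices have one, the hypothesis $|B|\le\pi(M)+M^{2/3}$ (which puts $|B|$ only $O(M^{2/3})$ above the extremal value $\pi(M)+1$) is violated. The hypothesis thus makes the progression $\{u+vm:1\le m\le M\}$ very rigid, and I would milk that rigidity first for $v=1$ and then for $u\ll M^{2/3}\log M$, replacing $(A,B)$ at each stage by a new pair of the type produced by Lemma~\ref{lm:reduction}, with $|B'|\le|B|$ and progression length still $(1-o(1))M$, so that the hypotheses persist.

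The first step is to show that $u+vm$ is $M$-smooth for all but $o(M)$ of the indices $m$. Indeed, a prime $p>M$ is coprime to $v$ (because $(u,v)=1$) and so meets a single residue class modulo $p$; an interval of $M<p$ consecutive terms of a progression meets that class at most once, so $p$ divides at most one of $u+v,\dots,u+Mv$. Hence, taking $\Cal{M}=\{m:P^+(u+vm)>M\}$ and letting $p_m$ be any prime factor of $u+vm$ exceeding $M$, the hypothesis of Lemma~\ref{lm:reduction}(2) is met, so $|\Cal{M}|\le|B|=o(M)$.

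Next I would reduce to $v=1$. Combining the first step with de Bruijn's estimate for the count of $M$-smooth integers in an interval, with the Brun--Titchmarsh bound $\ll\frac{v}{\phi(v)}\cdot\frac{M}{\log M}$ for the number of primes exceeding $M$ in the progression $\{u+vm\}$ (such primes being private), and with the near-equidistribution of smooth numbers in residue classes coprime to $v$, one confines the surviving $u+vm$ to $[1,M^{1+o(1)}]$ and forces $\frac{v}{\phi(v)}$ to be essentially minimal --- which should reduce us to $v=1$, the exact reduction being the delicate point discussed below. With the progression then of the form $g\cdot\{u+1,\dots,u+M\}$, each $g(u+m)=b_1b_2$ with $b_i\in B$ and the primes in $(u,u+M]$ private, I would play the counts of primes and of $2$- and $3$-almost-primes in $[u+1,u+M]$ against the $M^{2/3}$ of slack in $|B|$ to force $u+M=(1+o(1))M$, and more precisely $u\ll M^{2/3}\log M$ --- the threshold being exactly the size of a perturbation of the interval endpoints that the $M^{2/3}/\log^2 M$-scale count of $3$-almost-primes can still absorb.

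The hard part is pinning $v$ at $1$ \emph{exactly} (for instance ruling out $v=2$, which the crude count of primes in progressions does not achieve on its own), and likewise getting the precise bound on $u$: the naive divisor-counting inequalities all collapse to the trivial $M\ll|B|^2$, so one genuinely needs the stability input that a near-extremal $B$ must look like $\{1\}\cup\{p\le M:p\text{ prime}\}$ together with $O(M^{2/3})$ exceptional elements, plus a careful account of exactly which numbers of the progression force those exceptional elements --- which is essentially the $3$-sphere mechanism behind the main theorem. I would expect to argue here by contradiction: assuming $v\ge2$ (respectively $u\gg M^{2/3}\log M$), show that the basis elements forced by the primes, the products of two primes, and the $3$-almost-primes in the progression together number more than $\pi(M)+M^{2/3}$.
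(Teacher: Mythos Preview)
Your first step --- using primes $>M$ as private primes to show at most $|B|=o(M)$ indices fail to be $M$-smooth --- matches the paper exactly. After that, however, you reach for much heavier tools (de Bruijn smooth-number counts, Brun--Titchmarsh, equidistribution of smooth numbers in residue classes) and still end up with acknowledged gaps: you write that pinning $v$ to $1$ exactly is ``the delicate point'' and propose to resolve it and the bound on $u$ via ``the $3$-sphere mechanism behind the main theorem''. That last suggestion is circular: Lemma~\ref{lm:small_ap} is a preliminary reduction that the $3$-sphere argument is built on top of, not the other way around.

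The paper's route is both more elementary and complete. The key device you are missing is a factorial divisibility bound: after removing the indices in $\Cal{M}$ (your first step) and, for each prime $p<M$, one index $m_p$ maximizing $v_p(u+m_pv)$ (a set $\Cal{E}$ of size $\le\pi(M-1)$), one has
\[
\prod_{\substack{1\le m\le M\\ m\notin\Cal{M}\cup\Cal{E}}}(u+mv)\ \Big|\ (M-1)!.
\]
Setting $L=|\Cal{M}\cup\Cal{E}|\le |B|+\pi(M-1)=(2+o(1))M/\log M$ and comparing the two sides via Stirling gives a contradiction unless $v\le 9$ and $u\le 7M$. No smooth-number machinery is needed.

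Once $v\le 9$ and $u\le 7M$, every $u+mv\le 16M$, so each term has at most one prime factor $p>4\sqrt{M}$, and that only to the first power. Taking
\[
\Cal{M}=\{m_p:4\sqrt{M}<p\le M\}\cup\{m:\ u+mv\text{ is a prime }>M\}
\]
as a set of indices with private primes, Lemma~\ref{lm:reduction} gives
\[
|B|\ge \pi(M)-\pi(4\sqrt{M})+\bigl(\pi(u+Mv;v,u)-\pi(\max\{M,u\};v,u)\bigr).
\]
For any $2\le v\le 9$ the last bracket is $\gg M/\log M$ by the prime number theorem in progressions to bounded modulus, which contradicts $|B|\le\pi(M)+M^{2/3}$; this pins $v=1$ exactly with no further input. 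For $v=1$ the bracket is $\pi(u+M)-\pi(M)\asymp u/\log M$, and the same hypothesis forces $u\ll M^{2/3}\log M$.

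So: the gap in your proposal is real, but the fix is a simpler idea (the factorial bound plus Stirling), not a harder one.
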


\begin{proof}[Proof of upper bound on $M$.]  We begin by using the lemma. If $p$ is a prime $\geq M$ then it can divide at most one $u+mv$ (since $(u,v)=1$). Let 
$$
\Cal{M} := \{ m:\ 1\leq m\leq M \ \& \ \exists \ p\geq M:\ p| u+mv\} .
$$

Now if $1\leq m\leq M$ and $m\not \in \Cal{M}$ then all prime factors of $u+mv$ are $<M$. 
For each prime $p<M$ select $m_p$ for which $v_p(u+m_pv)$ is maximized and then 
$ \Cal E := \cup_{p<M} \{ m_p\} $.  We claim that
$$  
 \prod_{\substack{1\leq m \leq M \\ m\not\in  \Cal{M} \cup  \Cal{E}}}(u+mv) \ \ \text{divides}  \ \ (M-1)!
 $$
Indeed, the left hand side has only prime divisors less than $M$ and for such a prime $p < M$ the maximal power of $p$ which divides the left hand side is at most
$$
	\sum_{k=1}^{\infty} \big[ \frac{M-1}{p^k} \big].
$$
This is because if $k \leq v_p(u+m_pv)$ the only elements of $\{u + mv \}$ divisible by $p^k$ are of the form $ u + (m_p + jp^k)v, j \in \mathbb{Z}$. But the same total power of $p$ also divides $(M-1)!$.

Let $L:=|\Cal{M} \cup  \Cal{E}| \leq |\Cal{M} |+| \Cal{E}|< |B|+\pi(M-1)$. Hence 
\beq \label{eq:bin_estimate}
  \prod_{m=1}^{M-L}   (u+mv) \leq (M-1)!  
\eeq
If we assume that $|B|\leq \{ 1+o(1)\} M/\log M$ then $L\leq \{ 2+o(1)\} M/\log M$ by the prime number theorem.
If  $v\geq 9$ then (\ref{eq:bin_estimate}) yields\footnote{The absolute constants are probably not optimal, but it doesn't affect the later steps of the proof.}, using Stirling's formula,
\be
 (9/e^{2+o(1)})^M (M/e)^M  &\ll& (9(M-L)/e)^{M-L} \\
 										  &\ll&(M-L)! v^{M-L} \leq (M-1)!  \ll (M/e)^M ,
\ee
which gives a contradiction for $M$ sufficiently large.

If $u\geq 7M$ then (\ref{eq:bin_estimate}) yields, using Stirling's formula,
\be
 (8^8/e^{2+o(1)})^M (M/e)^{8M} &\ll& ((8M-L)/e)^{8M-L}  \\
 											  &\ll& (8M-L)! \leq (7M)!(M-1)! \ll (7M/e)^{7M}(M/e)^{M}, 
\ee
 which gives a contradiction for $M$ sufficiently large.

   Hence we may assume that $v\leq 9$ and $u<7M$, so that each $u+mv\leq 16M$. Any element of the arithmetic progression $\{ u + mv \}$ is divisible by at most one prime $p>4\sqrt{M}$ and then to the first power. For each prime $4\sqrt{M}<p\leq M$, select some $m_p$ for which $p|u+m_pv$. We let $\Cal M = \ \{ m_p: \ 4\sqrt{M}<p\leq M\}$ and apply Lemma \ref{lm:reduction}, so that $|B|\geq \pi(M)-\pi(4\sqrt{M})$.

We can do better than this. We can take 
$$
\Cal{M} = \ \{ m_p: \ 4\sqrt{M}<p\leq M\}\cup \{ p>M:\ p=u+mv\  \text{ for some } \ 1\leq m\leq M\}
$$
so that $|\Cal{M}|=\pi(M)-\pi(4\sqrt{M})+\pi(u+Mv;v,u)-\pi(\max\{ M,u\};v,u)$. This latter quantity $\pi(u+Mv;v,u)-\pi(\max\{ M,u\};v,u)$ is $>\epsilon M/\log M$ unless 
$v=1$ and $u\ll \epsilon M$.  In fact to get as small as the set already constructed we must have $u\ll M^{2/3}\log M$ and $v = 1$, which we will assume from now on. 
\end{proof}

We proceed with the following simple lemma which says that the regime $v = 1, u \ll M$ is essentially the same as $v =1, u = 0$.

\begin{lemma} \label{lm:power_primes}
	Let $I$ be an integer interval  of size $M$ and step one contained in the interval $[2M]$. Then for any $x \in [M]$ there is $k \geq 0$ such that $2^kx \in I$.
\end{lemma}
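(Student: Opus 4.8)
The plan is to make the interval concrete: since $I$ is a block of $M$ consecutive integers contained in $[2M]=\{1,\dots,2M\}$, I would write $I=\{a+1,a+2,\dots,a+M\}$ for a suitable integer $a$, and the containment $I\subset[2M]$ forces $0\le a\le M$. This inequality $a\le M$ is the only place the hypothesis ``$I\subset[2M]$'' is used, and it is exactly what makes the interval long enough relative to where it sits that a geometric progression of ratio $2$ cannot jump over it.

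Next, fix $x\in[M]$ and consider the sequence $x,2x,4x,\dots$. I would let $k$ be the largest integer $\ge 0$ with $2^kx\le a+M$. Such a $k$ exists because already $2^0x=x\le M\le a+M$, so the set $\{j\ge 0:2^jx\le a+M\}$ is nonempty (and it is finite), which also guarantees $k\ge 0$ as required. By maximality of $k$ we have $2^{k+1}x>a+M$; since both sides are integers this gives $2^{k+1}x\ge a+M+1$, hence $2^kx\ge (a+M+1)/2>(a+M)/2$.

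The final step is to combine this with $a\le M$. From $a\le M$ we get $(a+M)/2\ge a$, so $2^kx>(a+M)/2\ge a$, and therefore $2^kx\ge a+1$. Together with $2^kx\le a+M$ this yields $2^kx\in\{a+1,\dots,a+M\}=I$, which is the claim. I do not expect any real obstacle here; the only mild subtlety is the boundary case $a=M$, where $(a+M)/2=a$, but the inequality $2^kx>(a+M)/2$ is strict, so the argument still closes.
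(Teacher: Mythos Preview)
Your proof is correct and follows essentially the same approach as the paper: write $I=[a+1,a+M]$ with $0\le a\le M$, pick the extremal $k$ for which $2^kx$ still lies at or below $a+M$, and use $a\le M$ to deduce $2^kx>(a+M)/2\ge a$. The only cosmetic difference is that the paper defines $k$ as the smallest integer with $2^{k+1}x>a+M$ (and first disposes of the case $x\in I$ separately), whereas your equivalent choice of the largest $k$ with $2^kx\le a+M$ handles all $x\in[M]$ uniformly.
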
 
\begin{proof}
	Let $x \in [M]$ and $I =  [a+1, a + M]$ for some $0 \leq a \leq M$. If $x \in I$ we simply take $k = 0$. Otherwise, let $k$ be the smallest integer such that $2^{k+1}x > a + M$. Then $2^kx \leq a + M$ by construction. On the other hand $2^kx > a$ since $a \leq M$, so $2^kx \in I$.
\end{proof}

%Hence we may assume that $v\leq 8$ and $u<7M$, so that each $u+mv\leq 15M$. But then we can simply take $\Cal{M} = \{ m : u + mv \text{ is prime} \}$, and by the Dirichlet theorem on primes in %arithmetic progressions, $|\Cal{M}| \geq \pi(M)$ for $M$ sufficiently large. Then by Lemma \ref{lm:reduction}, $|B| \geq \pi(M)$.

%Any element of the arithmetic progression is divisible by at most one prime $p>\sqrt{15M}$ and then to the first power. For each prime $\sqrt{15M}<p\leq M$, select some $m_p$ for which $p|u+m_pv$. %We let $\Cal M = \ \{ m_p: \ \sqrt{15M}<p\leq M\}$ and apply (ii), so that $|B|\geq \pi(M)-\pi(\sqrt{15M})$.

%We can do better than this. We can take 
%$$
%\Cal M = \ \{ m_p: \ \sqrt{27M}<p\leq M\}\cup \{ p>M:\ p=u+mv\  \text{for some} \ 1\leq m\leq M\}
%$$
%so that $|\Cal M|=\pi(M)-\pi(\sqrt{27M})+\pi(u+Mv;v,u)-\pi(\max\{ M,u\};v,u)$. This latter quantity is $>\epsilon M/\log M$ unless 
%$v=1$ and $u\ll \epsilon M$. In fact to get as small as the set already constructed we must have $u\ll M^{2/3}\log M$. However this means that the elements of the arithmetic progression are $\leq %M+O(M^{2/3}\log M)$ and so
%we can take our primes $p>\sqrt{M}+O(M^{1/6}\log M)$.  Hence we would have 
%$|\Cal M|=\pi(M+u)-\pi(\sqrt{M})+O(M^{1/6})$, using the Brun-Titchmarsh theorem.

Recall that using Lemma \ref{lm:reduction} and Lemma \ref{lm:small_ap} we have reduced the original pair $(A, B)$ to the case  $A = \{ g(u + m) \}, 1 \leq m \leq M$ with $A \subset B \cdot B$.  As $u \ll M^{2/3} \log M$, it follows that $u \leq M$, for $M$ large enough (i.e. larger than a fixed constant which in turn only affects the constant in the final bound of Theorem \ref{thm:main}). Similarly, in what follows we assume that $M$ is `sufficiently large' for all the asymptotic estimates to hold.

Let $\Cal{P}_1$ be the set of primes in the interval  $[M^{1/3} + 1, M]$ and $\Cal{P}_2$ be the set of primes in the interval $[3, M^{1/3}]$. By Lemma \ref{lm:power_primes}, we can pick  for each integer $x \in  [M]$ some $m_x$ such that $u + m_x = 2^{k(x)}x$. We define 
 $$
 \Cal{M}_1 = \{1 \leq m_p \leq M,  \,\, p \in \Cal{P}_1 \}
 $$ and
$$
\Cal{M}_2 = \{1 \leq m_x \leq M,  \,\, x = p_ip_jp_k \},
$$ 
where $p_i, p_j, p_k$ are distinct primes in $\Cal{P}_2$.  

 By the Prime Number Theorem, 
$$
|\Cal{M}_1| \geq \pi(M) - \pi(M^{1/3})
$$ 
and 
$$
|\Cal{M}_2| \gg \pi(M^{1/3})^3 \gg \frac{M}{\log^3 M}.
$$ 
Obviously, $\Cal{M}_1$ and $\Cal{M}_2$ as well as $\Cal{P}_1$ and $\Cal{P}_2$ are disjoint.  
We will now use a map similar to the one used in the proof of Lemma \ref{lm:reduction}, namely $\rho: \mathbb{Z} \to \mathbb{F}^{|\Cal{P}_1| + |\Cal{P}_2|}_3$. The map requires a bit of notation and is defined as follows. 

We assign a coordinate index in $\mathbb{F}^{|\Cal{P}_1| + |\Cal{P}_2|}_3$ to each element of $\Cal{P}_1 \cup \Cal{P}_2$ by mapping $\Cal{P}_1$ to the first $|\Cal{P}_1|$ coordinate indices and $\Cal{P}_2$ to the last $|\Cal{P}_2|$ coordinate indices. We  thus have defined a one-to-one correspondence between the primes in $\Cal{P}_1 \cup \Cal{P}_2$ and the coordinates of the linear space. Further, for each prime $p$ in  $\Cal{P}_1 \cup \Cal{P}_2$ the correspondence induces the canonical coordinate projection $\pi_p:  \mathbb{F}^{|\Cal{P}_1| + |\Cal{P}_2|}_3 \to \mathbb{F}_3$ which maps an element $x \in  \mathbb{F}^{|\Cal{P}_1| + |\Cal{P}_2|}_3$ to $x_p \in \mathbb{F}_3$, which is the coordinate of $x$ corresponding to the prime $p$. We will use the subscript notation for the action of coordinate projections from now on.

We now define the map $\rho: \mathbb{Z} \to \mathbb{F}^{|\Cal{P}_1| + |\Cal{P}_2|}_3$ coordinate-wise. For a prime $p_i \in \Cal{P}_1 \cup \Cal{P}_2$, we define  $\rho(\cdot)_{p_i} := v_{p_i}(\cdot) \mod 3$,  in other words, each coordinate is defined as the maximal power of the correspondent prime reduced modulo three.

Consider  the set $B' = \rho(B) - \rho(g)/2$. Then, $\rho(u + m) \subset B' + B'$, in particular for 
$m \in \Cal{M}_1 \cup \Cal{M}_2$.  Let $A' = \rho(u + m)$ with $m$ restricted to $\Cal{M}_1 \cup \Cal{M}_2$.

Now we are going to examine closely what $A'$ looks like. It will be convenient to introduce the following definition. 
\begin{defi}
	The \emph{Hamming $k$-sphere} (or simply \emph{$k$-sphere}) in  $\mathbb{F}^n_q$ is the set $S_k \subset \mathbb{F}^n_q$ of $\{0, 1\}$ vectors with exactly $k$ non-zero coordinates. 
\end{defi}

%The reader might note that for $q = 2$ the $k$-sphere is just a Hamming sphere of radius $k$ centered at zero.

Our construction of $\Cal{M}_1$ and $\Cal{M}_2$ reveals that $A'$ restricted to the first $|\Cal{P}_1|$ coordinates is a $1$-sphere, and restricted to the last $|\Cal{P}_2|$ coordinates is a $3$-sphere.   
\\
\\
We now state a lemma about $3$-spheres, which might be of independent interest, but defer the proof till the end of the current discussion. 

\begin{lemma} \label{lm:additive_sphere}
  Let $S_3$ be the $3$-sphere in $\mathbb{F}^n_3$. If $S_3 \subset B + B $ then $|B| = \Omega(n^2)$.
\end{lemma}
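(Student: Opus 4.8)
The plan is to show that if $S_3 \subset B+B$ in $\mathbb{F}_3^n$, then $B$ must be large by a counting/dimension argument that exploits the rigidity of the $1$-sphere inside $B+B$. First I would observe that $S_1 \subset S_3 - S_3$ is automatic and, more usefully, that every edge vector $e_i$ (the $i$-th standard basis vector) and every $e_i+e_j$ with $i\neq j$ must be ``hit'' in a constrained way. The key structural remark is: for a vector $v \in S_3$, say $v = e_i+e_j+e_k$, a representation $v = b+b'$ with $b,b'\in B$ forces the supports of $b$ and $b'$ to interact only on coordinates $i,j,k$ outside of cancellation, so up to translation we may assume $0 \in B$ (replace $B$ by $B - b_0$), after which $S_3 \subset B+B$ still gives a set $B$ whose pairwise sums cover all of $S_3$; the point is to extract from this covering a large collection of coordinates on which $B$ is ``spread out.''

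The main step I would carry out is a graph-theoretic double count. For each pair $i<j$, there are $n-2$ sphere points $e_i+e_j+e_k$; each is a sum of two elements of $B$. Associate to each $b\in B$ its support pattern and the induced ``partial sums'' on triples; because $\mathbb{F}_3$ has the feature that $1+1 = -1 \neq 0$ and $1+2 = 0$, a coordinate of a sphere point equal to $1$ arises either as $1+0$, $0+1$, or $2+2$, and a coordinate equal to $0$ (i.e. a coordinate outside the support of the sphere point) arises as $0+0$, $1+2$, or $2+1$. I would set up a bipartite incidence between $B$ and the $\binom{n}{3}$ sphere points, bound the number of sphere points a single $b$ can participate in (this is where the constraint ``exactly three nonzero coordinates, all equal to $1$'' bites — a fixed $b$ can only reach sphere points $v$ for which $v-b \in B$, and $v-b$ must also have the right shape), and conclude $|B|^2 \gg \binom{n}{3}/(\text{max multiplicity})$. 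The arithmetic should be arranged so that the max multiplicity is $O(n)$, yielding $|B|^2 \gg n^2$, hence $|B| \gg n^2$ — wait, that only gives $|B|\gg n^{3/2}$, so the counting must be sharpened: I expect one needs to restrict to sphere points supported on a fixed pair $\{i,j\}$ and argue that the ``third coordinate'' map from such representations into $B$ is close to injective, giving $n-2 \ll |B|$ for \emph{each} pair in a positive-density family, which is still only linear.

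The genuinely efficient argument, and the one I would ultimately pursue, is via linear algebra: I would look at $\mathrm{Span}(B)$ and the quadratic structure. Consider the set of differences $S_3 - S_3$; it contains $e_i - e_j$ and $e_i+e_j-e_k-e_\ell$ etc., and more importantly one can try to show that if $|B|$ is small then $B+B$ lies in a low-complexity set (a bounded union of cosets of a subspace, or a set cut out by few quadratic equations) which cannot contain all of $S_3$ because $S_3$ is ``quadratically spread.'' Concretely, I would try to show: if $|B| = o(n^2)$ then there is a nonzero quadratic form (or an affine subspace of codimension $\gg n$) vanishing on $B+B$, contradicting the fact that $S_3$ — a combinatorially rich set of size $\binom{n}{3}$ — is not contained in the zero set of any low-rank quadratic. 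The main obstacle, and where I'd spend the most effort, is precisely quantifying ``$S_3$ is not contained in a low-complexity variety'': one must rule out \emph{every} subspace of codimension $c n$ and every bounded-rank quadric, which I expect requires a careful argument using the automorphism group $S_n$ acting on $S_3$ (so any such variety containing $S_3$ must be $S_n$-symmetric, drastically limiting the options to symmetric functions of coordinates, which one can then check directly do not work unless the dimension is large). I would close the proof by combining this symmetry reduction with the crude bound $|B|^2 \gtrsim |B+B| \gtrsim |S_3| = \binom{n}{3}$ as a sanity check and the structural dichotomy above to upgrade $n^{3/2}$ to the claimed $n^2$.
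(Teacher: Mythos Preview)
Your proposal does not reach the claimed bound. The first-moment incidence count you set up---pairing elements of $B$ with sphere points---can only give $|B|^2 \gg |S_3|/(\text{multiplicity})$, and as you yourself note this yields at best $|B|\gg n^{3/2}$. Your fallback, the ``quadratic form / low-complexity variety'' dichotomy, is not a valid step: there is no mechanism by which $|B|=o(n^2)$ forces $B+B$ to lie on a low-rank quadric or in a coset union of bounded complexity. A sumset $B+B$ of size $o(n^4)$ can still be completely generic from the point of view of quadratic forms, and the $S_n$-symmetry argument you sketch constrains only varieties that contain \emph{all} of $S_3$, not varieties that contain $B+B$; these are different objects and the symmetry does not transfer. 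So the upgrade from $n^{3/2}$ to $n^2$ is genuinely missing.

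The paper's argument supplies exactly the missing second-moment idea. One builds the bipartite graph $H$ on two copies of $B$ with an edge for each $a\in S_3$, and instead of counting edges one counts \emph{paths of length two}, i.e.\ estimates $\sum_{w} d(w)^2 = \sum_{v_1,v_2} |N(v_1)\cap N(v_2)|$. The point is that if $v_1,v_2$ share a neighbour then $v_1-v_2 \in S_3-S_3$, and differences in $S_3-S_3$ fall into three explicit shapes (six, four, or two nonzero coordinates), each of which contributes $O(n^4)$ to the sum---the four-coordinate case being delicate and requiring a pruning step together with a sublemma about additive coverings of $S_2$. Once $\sum d(w)^2 \ll n^4$ is in hand, Cauchy--Schwarz gives $n^6 \ll |E(H)|^2 \le |B|\sum d(w)^2 \ll |B|\,n^4$, hence $|B|\gg n^2$. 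The structural classification of $S_3-S_3$ is the key combinatorial input you did not locate.
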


%We strongly believe that  Lemma \ref{lm:additive_sphere} can be improved to $|B| = \Omega(n^2)$ which would in turn give an almost sharp upper bound $\pi(M) + M^{2/3 - o(1)}$ in Theorem 1, but %we were unable to prove a stronger version.

Using Lemma \ref{lm:additive_sphere} as a black box, we now finish the proof of Theorem \ref{thm:main}.

\begin{proof}[Proof of Theorem \ref{thm:main}]
	Let $A' \subset B' + B'$ be the sets defined in the preceding discussion. 	Let $H$ be a graph with $B'$ as the vertex set and for each $a \in A'$ a single edge is placed joining some pair $(b, b')$ with $b + b' = a$. In fact, the edges are of two types: the ones corresponding to $m \in \Cal{M}_1$ and to $m \in \Cal{M}_2$.  Consider the subgraph $H'$ spanned by the edges of the first type, so $E(H')$ restricted to the first $|\Cal{P}_1|$ coordinates is a $1$-sphere and the projection to the last $|\Cal{P}_2|$ coordinates is zero. 
	
	Let $H_1, \ldots, H_K$ be the connected components of $H'$. It follows that $V(H_i)|_{\Cal{P}_2}$ (i.e. projection to the last $|\Cal{P}_2|$ coordinates) can attain only two values on each component and they must sum to zero. Denote these vectors by $\vec{v}_i$ and $-\vec{v}_i$. Moreover, if $H_i$ contains an odd cycle,  $\vec{v}_i = \vec{0}$, since chasing such a cycle implies $\vec{v}_i = -\vec{v}_i$. On the other hand, $H_i$ cannot contain even cycles at all, since it would imply a linear dependence between edges, which is absurd as they form a basis for the subspace spanned by the first  $|\Cal{P}_1|$ coordinates.  
	
	Let $T$ be the number of trees among $H_i$. Summing up, we have the following observations. First, 
	\beq \label{eq:treebound}
	 |V(H')|_{\Cal{P}_2}| \leq 2T + 1,
	\eeq
	since the vertices of any component which is not a tree have trivial projection to $\Cal{P}_2$-coordinates and for each tree the vertices are projected onto at most two vectors. Second,  $|V(H_i)| \geq |E(H_i)|$ since $V(H_i)$ must span the subspace generated by $E(H_i)$ which has dimension $|E(H_i)|$. Of course, in the case of a tree we have $|V(H_i)| = |E(H_i)| + 1$. Thus, we have 
	\beq \label{eq:tree_estimate}
	 \sum_i |E(H_i)| + T \leq  |V(H')|.
	\eeq
	
	Let now $B_S = (B' \setminus V(H'))\big|_{\Cal{P}_2} \cup V(H')\big|_{\Cal{P}_2}$.  It is trivial that $B_S|_{\Cal{P}_2} = B'|_{\Cal{P}_2}$. Thus, $B_S|_{\Cal{P}_2} + B_S|_{\Cal{P}_2}$ contains a 3-sphere (which corresponds to the elements in $\Cal{M}_2$), so we can apply Lemma \ref{lm:additive_sphere} and obtain
	\beq \label{eq:s3estimate}
	 |(B' \setminus V(H'))\big|_{\Cal{P}_2}| + |V(H')\big|_{\Cal{P}_2}| \geq |B_S| \geq |B_S|_{\Cal{P}_2}| \gg  |\Cal{P}_2|^{2} \geq \frac{M^{2/3}}{\log^2 M}.
	\eeq 

We have by (\ref{eq:treebound}), (\ref{eq:tree_estimate}), (\ref{eq:s3estimate}) and the fact that $|E(H')| = |\Cal{M}_1|$
\be
	|B| \geq |B'| &=&  |V(H')| + |B' \setminus V(H')| \geq \\ 
		 &\geq& |E(H')| + T +  |(B' \setminus V(H'))\big|_{\Cal{P}_2}|	\geq \\
		 &\geq& |\Cal{M}_1| + \frac{1}{2}( |V(H')|_{\Cal{P}_2}| +  |(B' \setminus V(H'))\big|_{\Cal{P}_2}|) - 1 \geq \\
		 &\geq& \pi(M) - \pi(M^{1/3}) + \Omega \left(\frac{M^{2/3}}{\log^2 M} \right) = \pi(M) + \Omega \left(\frac{M^{2/3}}{\log^2 M} \right)
\ee

This finishes the proof.

%	which means that at least (moving the implicit constant in $\gg$ into the $o(1)$ term)
%	$$
%	 |\Cal{M}_2|^{2/3 -o(1)}  \leq |V(H')\big|_{\Cal{P}_2}| \leq 2T + 1
%	$$ 
%	or 
%	$$
%	 |\Cal{M}_2|^{2/3 -o(1)} \leq |(B' \setminus V(H'))_{\Cal{P}_2}| \leq |B' \setminus V(H')|.
%	$$
%	In any case we have (suppressing constants into the $o(1)$ term)
%	$$
%	|\Cal{M}_1| + |\Cal{M}_2|^{2/3 -o(1)} \leq |V(H')| + |B' \setminus V(H')| = |B'|,
%	$$ so unwinding the definitions and using the trivial $|B'| \leq |B|$, we obtain (again hiding lower order terms in $o(1)$)
%	$$
%	\pi(M) + M^{2/3 - o(1)} \leq \pi(M) - \pi(M^{1/3}) + M^{2/3 - o(1)} \leq |\Cal{M}_1| + |\Cal{M}_2|^{2/3 -o(1)} \leq |B|.
%	$$
		
\end{proof}

It remains to prove Lemma \ref{lm:additive_sphere}.

\begin{proof}[Proof of Lemma \ref{lm:additive_sphere}]
	Let us examine what a difference between two elements $a, a' \in S_3$ can look like. To this end, let us fix $k \in \mathbb{F}_3^n$ and look at the number of solutions to 
	\beq \label{eq:diff}
		k = a - a', \ a, a' \in S_3.
	\eeq
	For $k \neq 0$ there are only three possibilities.
	\\
	\textbf{Case 1.} $k$ has three one-coordinates and three two-coordinates. Then we can uniquely reconstruct $a$ and $a'$.
	\\
	\textbf{Case 2.} $k$ has four non-zero coordinates: two ones and two minus ones (note that in $\mathbb{F}_3$ we have $-1 = 2$). Then there are less than $n$ solutions to (\ref{eq:diff}). 
	\\
	\textbf{Case 3.} $k$ has only two non-zero coordinates: one of them is one and the other is equal to two. Then there are less than $n^2$ solutions to (\ref{eq:diff}). 
	\\

We will say that a pair of elements $(x, y)$ in $\mathbb{F}^n_3 \times \mathbb{F}^n_3$ (not necessarily in $S_3 \times S_3$) is a Case 1, 2, 3 pair if $x - y$ has respectively 6, 4, 2 non-zero coordinates, as described above. 

	%. Let $N(b)$ denote the set of neighbors of a vertex $b \in B$ and %$d(b)$ be the degree of $b$. We start with the following identity, where the sum runs over all ordered pairs of vertices (so $b$ might be equal to $b'$)
%	$$
%		\sum_b d^2(b) = \sum_{(b, b')} |N(b) \cap N(b')|.
%	$$	
	
	Let $H$ be a graph on $B$ such that for each $a \in S_3$ there is an edge between a single pair $(b, b')$ for which $a = b + b'$, similar to the one we have already encountered in the proof of Theorem \ref{thm:main}. Formally, we construct $H$ as a bipartite graph on two disjoint copies of $B$ (namely, $B_1$ and $B_2$) as color classes, and for each $a \in S_3$ we pick the smallest pair $(b_1, b_2)$ in the lexicographical order such that $a = b_1 + b_2$. We then place an edge between $b_m := \min(b_1, b_2) \in B_1$ and $b_M: = \max(b_1, b_2) \in B_2$, where again $\min$ and $\max$ are meant with respect to the lexicographical order. 
	
	The crucial observation is that if $v_1, v_2 \in B_1$ have a common neighbour, then the pair $(v_1, v_2)$ is either a Case 1, 2 or 3 pair. Indeed, then there is $w \in B_2$ such that both $v_1 + w \in S_3$ and $v_2 + w \in S_3$, so there are $a, a' \in S_3$ such that $v_1 - v_2 = a - a'$. 
	
	In fact, we are going to estimate 
	\beq \label{eq:CSSUM}
		 \sum_{(v_1, v_2) \in B_1 \times B_1} |N(v_1) \cap N(v_2)| = \sum_{w \in B_2} d^2(w).	
	\eeq
  For convenience, we will assume henceforth that vertices denoted by the letter $v$ belong to the first color class $B_1$ while those denoted by $w$ belong to the color class $B_2$.  The sum in (\ref{eq:CSSUM}) naturally splits into a sum over Case 1, Case 2 and Case 3 pairs, which we will estimate separately. Our aim is to bound each contribution from above by $O(n^4)$.
	
The contribution given by the Case 2 pairs is the hardest one. In fact the contribution can be as large as $n^5$, for example if $B_1 = B_2 = S_1 \cup S_2$. To overcome this difficulty we will excise some edges from $H$ while still retaining a positive proportion of $S_3$ represented by edges, but significantly reducing the contribution from Case 2 pairs.
	
	\vskip 1em
	\noindent \textbf{Case 1 pairs.}
	\vskip 1em	
	Observe that for any Case 1 pair $(v_1, v_2)$ one has $|N(v_1) \cap N(v_2)| \leq 1$, since for a common neighbour $w$ of $v_1, v_2$ one can uniquely reconstruct $v_1 + w$ and $v_2 + w$. Thus, we can simply bound the whole contribution from Case 1 pairs by $|B|^2$, which we may assume is at most $n^4$, otherwise we are done.

	\vskip 1em
	\noindent \textbf{Case 3 pairs.}
	\vskip 1em	
	
	Now we estimate the contribution of the pairs with $b - b' = (\ldots, 0, 1,  \ldots, 2, \ldots)$. We claim that for a fixed $k = (\ldots, 0, 1,  \ldots, 2, \ldots)$ one has
	\beq \label{eq:case3ineq}
	\sum_{b-b' = k} |N(b) \cap N(b')| \leq n^2.
	\eeq

In what follows we will use natural coordinate projections $\pi_I : \mathbb{F}_3^n \to \mathbb{F}_3^I$, where $I \subset [n]$. In particular, if $I$ is a singleton set, we write $\pi_i$ for the projection on the $i$th coordinate, and $\pi_{i, j}$ for the projection on the two-dimensional space spanned by the $i$th and $j$th coordinates. 
	
	Now let  $i_1, i_2$ be the indices such that $\pi_{i_1}(k) = 1$ and $\pi_{i_2}(k) = 2$, respectively. For an arbitrary common neighbor $b_c$  of $b$ and $b'$ one can write 
	$$
	(b + b_c) - (b' + b_c) = b - b' = k.
	$$
	Thus, $\pi_{i_1} (b + b_c) = \pi_{i_2}(b' + b_c) = 1$ and, moreover, there are exactly two indices $j_1, j_2$ such that
	$$
	\pi_{j_1, j_2} (b + b_c) = \pi_{j_1, j_2}(b' + b_c) = (1,1)
	$$ 	
	and the rest coordinates of $b+b_c$ and $b'+b_c$ are zero. We can thus map $b_c$ to a unique vector $v \in S_2$ with $\pi_{j_1, j_2}(v) = (1,1)$ (and the rest coordinates are zero). The set of common neighbours of Case-3 pairs $(b,b')$ with $b-b' = k$
	$$
	\bigcup_{b- b' = k} (N(b) \cap N(b'))
	$$ is then mapped the set of vectors $\{ v(k) \} \subset S_2 \subset \mathbb{F}_3^n$. The map is in fact injective, since we can reconstruct $b + b_c$ and $b' + b_c$ from $k$ and $v$. In turn, the pair of edges corresponding to $b + b_c$ and $b' + b_c$ uniquely identifies the pair $(b, b')$. Obviously the number of $v$'s is bounded by $n^2$, so we get (\ref{eq:case3ineq}). 
	
	Finally, there are at most $n^2$ choices of $k$ in (\ref{eq:case3ineq}), so the total contribution from Case 3 pairs to (\ref{eq:CSSUM}) is $O(n^4)$.

	\vskip 1em
	\noindent \textbf{Case 2 pairs.}
	\vskip 1em
	
   Let us get back to Case 2 pairs. The graph $H$ can be naturally decomposed into the union of graphs $H_i$, $i = 1 \dots n$, where $E(H_i) = \bar{S}_i := \{ v \in S_3 | \pi_i(v) = 1 \}$ and $V(H_i)$ is the set of vertices spanned by $E(H_i)$. In fact, $\bar{S}_i$ is a shifted $2$-sphere, so this decomposition corresponds to the decomposition of $S_3$ into a union of $n$ shifted two-spheres given by fixing the $i$th coordinate to one. 
   
  Let  $W_i \subset V(H_i)|_{B_2}$ be the set of vertices $w$ of $H_i$ in $B_2$ such that $d_{H_i}(w) > 2^{10}n$, where $d_{H_i}$ denotes degree in $H_i$. We claim that  
  \beq \label{eq:sublemmaclaim}
  \sum_{w \in W_i} d_{H_i}(w) \leq \frac{n^2}{50}.
  \eeq
  
  The claim follows from the sublemma below which essentially asserts that a large portion of the 2-sphere cannot be additively split into two unequal parts unless the size of one of the parts is comparable to the size of the whole sphere.
  
  \begin{lemma} \label{lm:sublemma}
  	Let $X, Y \subset \mathbb{F}^n_3$ with $|X| \leq n/2^{10}$ and $|Y| \leq n^2/100$. Then
  	$$
		| (X + Y) \cap S_2| \leq \frac{n^2}{50}.  	
  	$$
  \end{lemma}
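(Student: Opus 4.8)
The plan is to reduce everything to a clean inequality
\[
|(X+Y)\cap S_2|\ \le\ |Y|+g(X),\qquad g(X):=\Bigl|\bigcup_{k\in (X-X)\setminus\{0\}}(k+S_2)\cap S_2\Bigr|,
\]
together with the bound $g(X)\le 2(|X|-1)\,n+|X|^2$; granting these, the hypotheses on $|X|,|Y|$ give
\[
|(X+Y)\cap S_2|\ \le\ \frac{n^2}{100}+\frac{2n^2}{2^{10}}+\frac{n^2}{2^{20}}\ =\ n^2\Bigl(\tfrac1{100}+\tfrac1{2^{9}}+\tfrac1{2^{20}}\Bigr)\ <\ \frac{n^2}{50}.
\]

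For the first inequality I would enumerate $X=\{x_1,\dots,x_m\}$ and peel $Y$ greedily along this order: set $Y^{(1)}=Y$, and for $i=1,\dots,m$ let $Y_i:=Y^{(i)}\cap(S_2-x_i)$ and $Y^{(i+1)}:=Y^{(i)}\setminus Y_i$, so that $Y=Y_1\sqcup\cdots\sqcup Y_m\sqcup Y^{(m+1)}$. Decomposing $Y$ along this partition, for each fixed $i$ the block $Y_i$ contributes $x_i+Y_i\subseteq S_2$; each block $Y_j$ with $j\ne i$ lies in $S_2-x_j$, hence $(x_i+Y_j)\cap S_2\subseteq\bigl((x_i-x_j)+S_2\bigr)\cap S_2$; and the leftover $Y^{(m+1)}$ contributes nothing, since $Y^{(i+1)}\cap(S_2-x_i)=\emptyset$ by construction and $Y^{(m+1)}\subseteq Y^{(i+1)}$. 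Thus $(x_i+Y)\cap S_2\subseteq(x_i+Y_i)\cup\bigcup_{j\ne i}\bigl((x_i-x_j)+S_2\bigr)\cap S_2$, and taking the union over $i$, using $\sum_i|Y_i|\le|Y|$ and $\{x_i-x_j:i\ne j\}=(X-X)\setminus\{0\}$, yields the claim.

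To bound $g(X)$ I would first recall, exactly as in the Case analysis opening the proof of Lemma~\ref{lm:additive_sphere} but specialised to $S_2$, that a nonzero element of $S_2-S_2$ is either of \emph{type $(1,2)$} (two nonzero coordinates, one equal to $1$ and one to $2$) or of \emph{type $(1,1,2,2)$} (four nonzero coordinates, two $1$'s and two $2$'s). A short computation then gives $(k+S_2)\cap S_2=\{e_i+e_b:b\ne i,j\}$ when $k=e_i+2e_j$ — a subset of the \emph{star at $i$}, i.e.\ the $\le n$ vectors of $S_2$ whose support contains $i$ — while $|(k+S_2)\cap S_2|\le 1$ when $k$ has type $(1,1,2,2)$. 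Hence, writing $C:=\{i:e_i+2e_j\in X-X\text{ for some }j\}$, the type-$(1,2)$ contributions all lie among the $\le |C|\,n$ vectors of $S_2$ whose support meets $C$, and the type-$(1,1,2,2)$ contributions number at most $|X-X|\le|X|^2$; so $g(X)\le|C|\,n+|X|^2$.

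The one substantive point — and the step I expect to be the real obstacle — is the bound $|C|\le 2(|X|-1)$. I would view the type-$(1,2)$ differences as the edges of a graph $G$ on vertex set $X$, the edge $\{x,x'\}$ carrying the label $\operatorname{supp}(x-x')$ (a pair of coordinates), so that $C=\bigcup_{e\in E(G)}\mathrm{label}(e)$. Fix a spanning forest $F$ of $G$; then $|E(F)|\le|X|-1$, so the labels of $F$ cover at most $2(|X|-1)$ coordinates, and those are precisely the coordinates on which the subspace $V_F:=\operatorname{Span}\{x-x':\{x,x'\}\in F\}$ is not identically zero. Since every edge of $G$ joins two vertices of a single component of $G$, its difference is an $\mathbb{F}_3$-combination of $F$-differences and hence lies in $V_F$, so its label — the support of a type-$(1,2)$ vector in $V_F$ — involves only those coordinates. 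Therefore $C$ is contained in a set of at most $2(|X|-1)$ coordinates, which completes the bound on $g(X)$ and hence the proof.
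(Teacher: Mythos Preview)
Your proof is correct, and it reaches the same quantitative conclusion as the paper by a genuinely different route.

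The paper's argument is purely linear-algebraic: it lets $L=\mathrm{Span}(X)$, picks a basis so that $L$ occupies the last $l=\dim L\le|X|$ coordinates, and observes that (i) the orthogonal projection $\pi_M$ onto the first $m=n-l$ coordinates collapses $X+Y$ to a set of size $\le|Y|$, while (ii) at least $\binom{m}{2}$ elements of $S_2$ survive intact under the change of basis and land in $M$. This gives $|(X+Y)\cap S_2|\le|Y|+\bigl(\binom{n}{2}-\binom{n-|X|}{2}\bigr)$ in one stroke, with no case analysis of differences.

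Your argument instead peels $Y$ greedily against the translates $S_2-x_i$ to isolate the ``main'' contribution $\le|Y|$, and then bounds the residual $g(X)$ by classifying nonzero elements of $S_2-S_2$ into types $(1,2)$ and $(1,1,2,2)$. The spanning-forest step---showing that the coordinates appearing in type-$(1,2)$ differences of $X$ all lie in the support of the span of at most $|X|-1$ such differences---is a neat substitute for the paper's direct use of $\dim\mathrm{Span}(X)\le|X|$; it is really the same dimension bound in disguise, since $V_F\subseteq\mathrm{Span}(X-X)=\mathrm{Span}(X)-x_1$. The paper's approach is shorter and more readily generalises to $S_k$ for other $k$ (which is why the Remark following the lemma is immediate there), while yours extracts the structure of $S_2-S_2$ explicitly and would be the natural template if one wanted sharper control on which elements of $S_2$ are actually hit.
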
	  
  
  Before we proceed with the proof, let us show how the claimed bound (\ref{eq:sublemmaclaim}) follows from Lemma \ref{lm:sublemma}. Clearly, 
  $$
  |W_i| \leq \frac{|E(H_i)|}{2^{10}n} \leq \frac{n}{2^{10}}.
  $$ Let $\bar{S}'_i$ be the subset of $\bar{S}_i$ represented by all edges emanating from $W_i$. We have that $\bar{S}'_i \subset B_1 + W_i $. Equivalently, $\bar{S}'_i - 1_i \subset (B_1 - 1_i) + W_i$, where $1_i$ is the vector with $\pi_i(1_i) = 1$ and zero coordinates otherwise. 
  %We will now show that for two sets $W_1, W_2$ with $|W_1| \leq n/2^{10}$ and $|W_2| \leq n^2/100$ holds
  %$$
%	\left| (W_1 + W_2)  \cap S_2 \right| \leq n^2/50, 
  %$$
%and the claim will follow. 
Now either $|B_1| \geq n^2/100$ and we are done with the whole Lemma 4, or we apply Lemma \ref{lm:sublemma} with $X = W_i$ and $Y = B_1 - 1_i$, so that  
$$
 |\bar{S}'_i | \leq |(X+Y) \cap S_2| \leq  \frac{n^2}{50}.
$$ But $|\bar{S}'_i | = \sum_{w \in W_i} d_i(w)$ by construction.

\begin{remark}
	In fact, in the lines of Lemma \ref{lm:sublemma} one can prove the following more general result. Suppose $|A| \leq n/2^{10}$. Then 
	$$
		|(A+B) \cap S_2 | \leq \binom {n}{2}  - \binom {n - |A|}{2} + |B| \leq n|A| + |B|.
	$$
\end{remark}

\begin{proof}[Proof of Lemma \ref{lm:sublemma}]
	The intuition behind the proof is as follows. Let $L$ be the subspace spanned by $X$, so its dimension is at most $|X| \ll n$. Let $\pi$ be the projection to $L^{\perp}$ and assume that 
$|\pi(S_2)| \approx  |S_2|$ (we don't specify the exact meaning of $\approx$ here, but since $L$ is small, one may expect that $\pi(S_2)$ is comparable in size to the whole sphere). Then we can estimate 
$$
	|S_2 \setminus \left( (X + Y) \cap S_2 \right)| \geq |\pi(S_2 \setminus \left( (X + Y) \cap S_2 \right) )| \approx |S_2| - |Y|,
$$ 
since $|\pi(X +Y)| = |\pi(Y)| \leq |Y|$. Thus, $|(X + Y) \cap S_2| \approx |Y|$.

  Now we implement the outlined strategy. Let $L$ be subspace of $\mathbb{F}^n_3$ spanned by $X$, $l := \dim L$ and  $m := n - l$, so $m \geq (1 - 2^{-10})n$. Let $e_1, \ldots, e_m$ be standard basis elements, so $e_i = 1_i$ for $i = 1, \ldots, m$.  Without loss of generality we may assume that $\{ L, e_1, \ldots, e_m \}$ spans the whole space $\mathbb{F}^n_3$. Then let $T$ be a linear invertible transformation such that $T(e_i) = e_i$ for $1\leq i \leq m$ and $T(L) =\mathrm{Span}(\{e_{m+1}, \ldots, e_n \}) $. Denote by $M := \mathrm{Span}(\{e_{1}, \ldots, e_m \})$, by $M^c$ its set compliment and note that in particular $T(X) \perp M$.
  
 We have (as $T$ is one-to-one)
\ben 
	| (X+Y) \cap S_2| &=& |T(X + Y) \cap T(S_2)| \nonumber \\
							&=& |T(X + Y) \cap T(S_2) \cap M| + |T(X + Y) \cap T(S_2) \cap M^c|.  \label{eq:secondterms_}
\een

Since $T(X)$ has trivial orthogonal projection to $M$ and $T(X+Y) = T(X) + T(Y)$, we can crudely estimate
\ben 
	|T(X + Y) \cap T(S_2) \cap M| &\leq& |T(X+Y) \cap M| \nonumber \\
												&\leq& |\pi_M(T(X + Y))| = |\pi_M(T(Y))| \leq |Y|,  \label{eq:secondterms_1}
\een
where $\pi_M$ denotes orthogonal projection onto $M$. For the second term we have
\ben
 |T(X + Y) \cap T(S_2) \cap M^c| &\leq& |T(S_2) \cap M^c|  \label{eq:secondterms_2}  \\
 		&=& |T(S_2)| - |T(S_2) \cap M|. \nonumber
\een
But since $T(e_i + e_j) = T(e_i) + T(e_j) = e_i + e_j \in M$ for $1 \leq i, j \leq m$,  
\be
|T(S_2) \cap M| &\geq& \frac{m(m-1)}{2} \geq \frac{((1-2^{-10})n)((1-2^{-10})n - 1)}{2} \\
					    &\geq& 0.991\frac{n(n-1)}{2} \geq 0.99\frac{n^2}{2}, 
\ee
for $n$ large enough. Therefore,
\beq \label{eq:secondterms_3}
 |T(X + Y) \cap T(S_2) \cap M^c| \leq |T(S_2)| - 0.99\frac{n^2}{2} \leq 0.01\frac{n^2}{2}.
\eeq

Combining (\ref{eq:secondterms_}), (\ref{eq:secondterms_1}) and (\ref{eq:secondterms_3}) we obtain
$$
| (X+Y) \cap S_2| \leq 0.01\frac{n^2}{2} + \frac{n^2}{100} \leq \frac{n^2}{50},
$$
as required.

%Without loss of generality (reordering, if necessary, and excluding linearly dependent elements from $W_1$), there are $n - |W_1|$ basis elements $e_{|W_1| + 1}, \ldots, e_n$ ($e_i = 1_i$ is the vector %with the $i$th coordinate equal to one and the rest to zero), such that $\{W_1, e_{|W_1| + 1}, \ldots, e_n \}$ is linearly independent and span $\mathbb{F}^n_3$. So there is an invertible linear %transformation $T$ such that $\mathrm{Span}(T(W_1)) = \mathrm{Span}(\{e_1, \ldots, e_{|W_1|} \}) = \mathbb{F}^{|W_1|}_3$ and $T(e_i) = e_i$ for $i > |W_1|$. 
%Let $L$ be the subspace spanned by the last $n - |W_1|$ coordinates, so we have $\mathrm{Span}(T(W_i)) \cap L = 0$. Also, 
%$$
%T((W_1 + W_2) \cap S_2) = (T(W_1) + T(W_2)) \cap T(S_2)
%$$ 
%and
%\be
%	| (T(W_1) + T(W_2)) \cap T(S_2) \cap L| &=& | (T(W_1) \cap L + T(W_2) \cap L) \cap T(S_2)| \\
%																			& \leq & | T(W_2) \cap L| \leq |W_2| \leq \frac{n^2}{100}.
%\ee
%On the hand, $L$ is invariant under $T$, so $T(e_j + e_{j'}) = T(e_j) + T(e_{j'}) = e_j + e_{j'} \in S_2$ for $j, j' > |W_1|$. Thus
%$$
%|T(S_2) \cap L|  \geq \frac{((1 - 2^{-10})n)((1 - 2^{-10})n-1)}{2} \geq 0.991\frac{n(n-1)}{2} \geq 0.99\frac{n^2}{2}
%$$
%for $n$ large enough. Therefore, 
%$$
%\left| T((W_1 + W_2) \cap S_2) \right| \leq |S_2 \cap {L^{\perp}}| + |W_2| \leq (\frac{n^2}{2} - |S_2 \cap L|) + |W_2| \leq 0.01\frac{n^2}{2} + \frac{n^2}{100} \leq n^2/50.	
%$$  
\end{proof}
   
	We proceed with the proof of Lemma \ref{lm:additive_sphere}. 
 
Let us first recall some standard graph-theoretic notation. By $N_G(v)$ we denote the neighborhood of a vertex $v$ in a graph $G$. When the graph is omitted we assume that the neighborhood is taken in the ambient graph (in our case $H$). Similarly, by $d_G(v)$ we denote the degree of $v$ in $G$. 
 
  We prune each $H_i$, cutting all edges emanating from each $w \in W_i$, that is, from those vertices whose degree in $H_i$ is at least $2^{10}n$. By (\ref{eq:sublemmaclaim}), we will remove at most $n^2/50$ edges in total. Let us call the pruned graph $H'_i$ and count the number of common neighbours of all Case 2 pairs in $H'_i$. To emphasize the fact that we are counting common neighbours in $H'_i$ now, we denote by $N_{H'_i}(v)$ the set of neighbours of a vertex $v$ in $H'_i$. Note, however, that the property that a pair of vertices $(v_1, v_2)$ is a Case 2 pair is independent of the graph they belong to. 
  
We have  
  $$
	\sum_{(v_1, v_2) \text{ Case 2 in $H'_i$ }} |N_{H'_i}(v_1) \cap N_{H'_i}(v_2)| \leq \sum_{w \in V(H'_i)|_{B_2} } d_{H'_i}^2(w).  
  $$
On the other hand, by construction of $H'_i$, we have
\ben \label{eq:case2bound}
 \sum_{w \in V(H'_i)|_{B_2} } d_{H'_i}^2(w) &\leq& \max_ {w \in V(H'_i)|_{B_2}} d_{H'_i}(w) \left( \sum_{w \in V(H'_i)|_{B_2} } d_{H'_i}(w) \right) \\
 												   &\leq& 2^{10}n|\bar{S}_i| \ll n^3. \nonumber
\een

Let $H' := \bigcup_i H'_i$. By (\ref{eq:sublemmaclaim}), 
\beq \label{eq:edgenumber}
|E(H')| \geq |E(H)| - \frac{n^3}{50} \gg n^3.
\eeq Also, we claim that if $v_1w, v_2w \in E(H')$ and $(v_1, v_2)$ is a Case 2 pair in $H'$, then there is an $i$ such that $v_1w, v_2w \in E(H'_i)$, that is, $(v_1, v_2)$ is a Case 2 pair in $H'_i$. Indeed,  there is an $i$ such that $\pi_i(v_1 + w) = \pi_i(v_2 + w) = 1$, but then $v_1w, v_2w \in E(H'_i)$ by construction. Thus, by (\ref{eq:case2bound}),
\beq \label{eq:case2estimate}
	 \sum_{ (v_1, v_2) \textrm{  Case 2 in $H'$ }} |N_{H'}(v_1) \cap N_{H'}(v_2)| \leq \sum_{i = 1}^n \sum_{ (v_1, v_2) \textrm{  Case 2 in $H'_i$ }} |N_{H'_i}(v_1) \cap N_{H'_i}(v_2)| \ll n^4.
\eeq

Now for the graph $H'$ we combine the estimates for Case 1, Case 2 and Case 3 pairs (as $E(H') \subset E(H)$ we can safely use the upper bounds obtained for $H$ in Cases 1 and 3) and conclude
	\ben \label{eq:degree_square_estimate}
		\sum_{w \in B_2} d_{H'}^2(w) &=& \sum_{(v_1, v_2) \in B_1 \times B_1} |N_{H'}(v_1) \cap N_{H'}(v_2)| \\ \nonumber									   
			 &=& \sum_{v_1 - v_2 \textrm{  Case 1} } |N_{H'}(v_1) \cap N_{H'}(v_2)| \\ \nonumber
			 &+& \sum_{v_1 - v_2 \textrm{  Case 2 }} |N_{H'}(v_1) \cap N_{H'}(v_2)| \\ \nonumber
			 &+& \sum_{v_1 - v_2 \textrm{  Case 3 }} |N_{H'}(v_1) \cap N_{H'}(v_2)| \\ \nonumber
				& \ll &	 n^4.
	\een

	By the Cauchy-Schwarz inequality 
	$$
	\left(\sum_{w \in B_2}  d_{H'}(w) \right)^2	 \leq |B_2|\left( \sum_{w \in B_2} d^2_{H'}(w) \right),
	$$	
	so (\ref{eq:edgenumber}) and (\ref{eq:degree_square_estimate})  give
%	Finally, by Cauchy-Schwartz,
	$$
	n^6 \ll  |E(H')|^2 \ll \left(\sum_{w \in B_2} d_{H'}(w)\right)^2  \ll |B_2|n^4.
   $$
   Therefore, $|B_2| = |B| = \Omega(n^2)$.
	
\end{proof}

\section{Acknowledgments}
   The author thanks an anonymous referee for various suggestions which greatly improved the exposition. The author is also grateful to Andrew Granville for various refinements of the original argument, in particular for optimizing the lower bound, and for bringing the problem back to the author's attention. The author would also like to thank Peter Hegarty for very careful proofreading and Ilya Shkredov for discussions and coining the term ``discrete sphere''.

\end{document}